\newtheorem{theorem}{Theorem}[section]
\newtheorem{remark}[theorem]{Remark}
\newtheorem*{aim-non}{Aim}
\newtheorem*{conjecture-non}{Conjecture}
\theoremstyle{definition}
\newtheorem{definition}{Definition}
\begin{document}
	\title[Spectral Data Parabolic Projective Symplectic/Orthogonal Higgs]
	{Spectral Data For Parabolic Projective Symplectic/Orthogonal Higgs Bundles}
	
	\author{Sumit Roy}\
	\address{Center for Geometry and Physics, Institute for Basic Science (IBS), Pohang 37673, Korea}
\email{sumit@ibs.re.kr}
\thanks{E-mail : sumit@ibs.re.kr}
\thanks{Affiliation: Center for Geometry and Physics, Institute for Basic Science (IBS), Pohang 37673, Korea}
\subjclass[2020]{14H70, 14D22, 14H60}
\keywords{Integrable systems; Moduli space; Parabolic bundle; Higgs bundle}
	
	\begin{abstract}
	 Hitchin in [Duke Math. J. 54 (1), 91-114 (1987)] introduced a proper morphism from the moduli space of stable $G$-Higgs bundles ($G=\mathrm{GL}(n,\mathbb{C}),\mathrm{Sp}(2m,\mathbb{C})$ and $\mathrm{SO}(n,\mathbb{C})$) over a curve to a vector space of invariant polynomials and he described the generic fibers of that morphism. In this paper, we first describe the generic Hitchin fibers for the moduli space of stable parabolic projective symplectic/orthogonal Higgs bundles without fixing the determinant. We also describe the generic fibers when the determinant is trivial.
	    
	\end{abstract}
	\maketitle
	
	\section{Introduction}
	Let $X$ be a compact (closed and connected) Riemann surface $X$ of genus $g \geq 2$. Higgs bundles over $X$ were introduced by Hitchin in \cite{H87a}. A \textit{Higgs bundle} over $X$ is a pair $(E, \phi)$ consisting of a a holomorphic vector bundle $E$ and a Higgs field $\phi : E \to E \otimes K$, where $K$ is the canonical bundle over $X$. The coefficients of the characteristic polynomial of $\phi$ defines a morphism
	$h : \mathcal{M}_{\mathrm{Higgs}}(r,d) \longrightarrow \mathcal{A}\coloneqq \bigoplus_{i=1}^{r} H^0(X, K^i)$,
	from the moduli space of stable Higgs bundles over $X$ of fixed rank $r$ and degree $d$ to a vector space $\mathcal{A}$, called the \textit{Hitchin map} (see \cite{H87}). Hitchin in \cite{H87}, showed that the generic fibers of $h$ are abelian varieties and this map gives the Higgs bundles moduli space a structure of an algebraically completely integrable system. Later in \cite{M94}, Markman generalized this result for the moduli space of $L$-twisted Higgs bundles $(E,\phi_L)$, where $L$ is a line bundle over $X$ and $\phi_L : E \to E \otimes L$.
	
	Let $D \subset X$ be a fixed finite subset. The notion of parabolic bundles over a curve and their moduli spaces were constructed by Mehta and Seshadri in \cite{MS80}. Their motivation was to extend the Narasimhan-Seshadri correspondence in the case of irreducible unitary representations of $\pi_1(X-D)$. A \textit{parabolic bundle} is a holomorphic vector bundle together with a weighted flag over each parabolic point $p\in D$. A \textit{parabolic Higgs bundle} on $X$ is a parabolic bundle $E$ on $X$ together with a parabolic Higgs field $\phi : E \to E \otimes K(D)$. The moduli space of parabolic Higgs bundles was constructed by Yokogawa \cite{Y93}.
	
	Symplectic (resp. orthogonal) parabolic bundles are parabolic bundles with a suitably defined nondegenerate anti-symmetric (resp. symmetric) form taking values in a line bundle $L$ (see \cite{BMW11} for more details). In \cite{BR89}, Bhosle and Ramanathan described the notion of parabolic principal $G$-bundles, where $G$ is a connected reductive group, and also constructed its moduli space. When all weights are rational, the notion of symplectic (resp. orthogonal) parabolic bundles coincides with the notion of parabolic principal $G$-bundles where $G$ is a symplectic (resp. orthogonal) complex group (see \cite{BMW11}).  A symplectic (resp. orthogonal) parabolic Higgs bundle is a symplectic (resp. orthogonal) parabolic bundle together with a parabolic Higgs field which is compatible (in a suitable sense) with the symplectic (resp. orthogonal) structures.
	
	In \cite{H87}, Hitchin also showed that the moduli space of stable symplectic/orthogonal Higgs bundles also forms an algebraically completely integrable system, fibered over a vector space, either by a Jacobian or a Prym variety of so-called spectral curves. In \cite{R20}, Roy generalized this result for the moduli space of stable parabolic symplectic/orthogonal Higgs bundles. 
	
	In this paper, we consider the moduli space of parabolic projective symplectic/orthogonal Higgs bundles with fixed rank and degre and fixed parabolic structure. We know that for odd rank the projective orthogonal group is same as the odd orthogonal group, i.e.  $\mathrm{PSO}(2m+1,\mathbb{C}) =\mathrm{SO}(2m+1,\mathbb{C})$. Therefore, we only consider the projective symplectic group $\mathrm{PSp}(2m,\mathbb{C})$ and projective even orthogonal group $\mathrm{PSO}(2m,\mathbb{C})$. A parabolic $\mathrm{PSp}(2m,\mathbb{C})$-Higgs (resp. $\mathrm{PSO}(2m,\mathbb{C})$-Higgs) bundle lifts to a parabolic $\mathrm{GSp}(2m,\mathbb{C})$-Higgs (resp. $\mathrm{GSO}(2m,\mathbb{C})$-Higgs) bundle. We gave an alternative description of the Prym varieties and we consider an action of the Jacobian group $\mathrm{Jac}(X)$ on the Prym varieties. We showed that the generic Hitchin fibers are isomorpic to the quotient variety (see Theorem \ref{PSp} and Theorem \ref{PSO_theorem}).
	
	Finally we consider the case when the symplectic/orthogonal form takes values in the trivial line bundle $\mathcal{O}_X$ and we fix this line bundle for the respective moduli spaces. In this case, the $2$-torsion subgroup $\mathrm{Jac}_2(X)\subset \mathrm{Jac}(X)$ acts on the Prym varieties and the generic fibers of the Hitchin map are isomorphic to the quotient variety (see Theorem \ref{trivial_PSp} and Theorem \ref{trivial_PSO}).

	\section{Preliminaries}
	\subsection{Parabolic bundles}\label{parabolic} 
	Let $X$ be a compact Riemann surface of genus of genus $g\geq 2$. Fix a subset $D = \{p_1,\dots , p_n\} \subset X$ of $n$ distinct marked points. 
	\begin{definition}
A \textit{parabolic bundle} $E_*$ of rank $r$ over $X$ is a vector bundle $E$ of rank $r$ over $X$ with a parabolic structure over the subset $D$, i.e. for each point $p \in D$
	\begin{enumerate}
		\item every fiber $E_p$ has a filtration of subspaces, i.e.
		\[
		E_p \eqqcolon E_{p,1}\supsetneq E_{p,2} \supsetneq \dots \supsetneq E_{p,r_p} \supsetneq E_{p,r_p+1} =\{0\},
		\]
		\item an increasing sequence of real numbers (parabolic weights) satisfying 
		\[
		0\leq \alpha_1(p) < \alpha_2(p) < \dots < \alpha_{r_p}(p) < 1,
		\]
	\end{enumerate}
	where $1\leq r_p \leq r$ is an integer. 
\end{definition}
 We denote the collection of all parabolic weights by $\alpha =\{(\alpha_1(p),\alpha_2(p),\dots ,\alpha_{r_p}(p))\}_{p\in D}$ corresponding to a fixed parabolic structure. The parabolic structure $\alpha$ is said to have \textit{full flags} if 
\[\mathrm{dim}(E_{p,i}/E_{p,i+1}) = 1
\] 
for all $i \in \{1,\dots, r_p\}$ and for each $p\in D$, or equivalently $r_p=r$ for each $p\in D$. In this paper, we will assume that the parabolic structure have full flag at every parabolic point $p \in D$.

The \textit{parabolic degree} of $E_*$ is defined by
\[
\operatorname{pardeg}(E_*) \coloneqq \deg(E)+ \sum\limits_{p\in D}\sum\limits_{i=1}^{r_p} \alpha_i(p) \cdot \dim(E_{p,i}/E_{p,i+1})
\]
and the \textit{parabolic slope} of $E_*$ is defined by
\[
\mu_{\mathrm{par}}(E_*) \coloneqq \frac{\text{pardeg}(E_*)}{r}.
\]

\begin{definition}
A \textit{parabolic homomorphism} $\phi : E_* \to E^\prime_*$ between two parabolic bundles is a homomorphism between underlying vector bundles such that at each parabolic point $p \in D$ we have 
\[
\alpha_i(p) > \alpha_j^\prime(p) \implies \phi(E_{p,i}) \subseteq E_{p,j+1}^\prime.
\]
Furthermore, we call such a homomorphism \textit{strongly parabolic} if
\[ \alpha_i(p) \geq \alpha_j^\prime(p) \implies \phi(E_{p,i}) \subseteq E_{p,j+1}^\prime
\]
for every $p \in D$.
\end{definition}
We denote by $\mathrm{PEnd}(E_*)$ and $\mathrm{SPEnd}(E_*)$ the parabolic and strongly parabolic endomorphisms of $E_*$ respectively.

The \textit{dual} and \textit{tensor product} of parabolic bundles can be defined in a natural way (see \cite{Y95}).

\begin{definition}
	A \textit{parabolic subbundle} $F_*$ of a parabolic bundle $E_*$ is a subbundle $F\subset E$ of the underlying vector bundle endowed with an induced parabolic structure. An induced parabolic structure on $F$ is defined as follows. For every parabolic point $p\in D$, the quasi-parabolic structure on $F$, i.e. the flag in $F_p$ is given by 
	\[
		F_p \eqqcolon F_{p,1}\supsetneq F_{p,2} \supsetneq \dots \supsetneq F_{p,r'_p} \supsetneq \{0\},
	\]
	where $F_{p,i}= F_p \cap E_{p,i}$, i.e. we are considering the intersection with the already given flag in $E_p$, and also scrapping all the repetitions of subspaces in the filtration. The weights $0\leq \alpha'_1(p) < \alpha'_2(p) < \dots < \alpha'_{r'_p}(p) < 1$ are taken to be the largest possible among the given weights which are allowed after the intersections, i.e. 
	\[
	\alpha'_i(p) = \mathrm{max}_j\{\alpha_j(p)| F_p \cap E_{p,j}=F_{p,i} \}=\mathrm{max}_j\{\alpha_j(p)| F_{p,i}\subseteq E_{p,j} \}
	\]
	That is to say, the weight associated to $F_{p,i}$ is the weight $\alpha_j(p)$ such that $F_{p,i}\subseteq E_{p,j}$ but $F_{p,i}\nsubseteq E_{p,j+1}$.
\end{definition}

\begin{definition}
A parabolic bundle $E_*$ is called \textit{semistable} (resp. \textit{stable}) if every nonzero proper subbundle $F_* \subset E_*$ satisfies
\[
	\mu_{\mathrm{par}}(F_*) \leq \mu_{\mathrm{par}}(E_*) \hspace{0.2cm} (\mathrm{resp. } \hspace{0.2cm} <).
\]

\end{definition}

The moduli space $\mathcal{M}(\alpha,r,d)$ of stable parabolic bundles over $X$ of fixed rank $r$ and degree $d$ and parabolic structure $\alpha$ was constructed by Mehta and Seshadri in \cite{MS80}. They also showed that $\mathcal{M}(\alpha,r,d)$ is a normal projective variety of dimension
\[
\dim \mathcal{M}(\alpha,r,d) =r^2(g-1) + 1 + \dfrac{n(r^2-r)}{2}, 
\]
where $n$ is the number of marked points and the last summand comes from the fact that the parabolic structure have full flags over each parabolic point.
\subsection{Parabolic Higgs bundles}
Let $K$ be the canonical bundle over $X$. We write $K(D) \coloneqq K \otimes \mathcal{O}(D)$.

\begin{definition}
A \textit{(strongly) parabolic Higgs bundle} over $X$ is a parabolic bundle $E_*$ over $X$ together with Higgs field $\Phi : E_* \to E_* \otimes K(D)$, such that $\Phi$ is strongly parabolic i.e. $\Phi(E_{p,i}) \subset E_{p,i+1} \otimes \left.K(D)\right|_p$ for all $p \in D$. 
\end{definition}

There is also a notion of parabolic Higgs bundle where the Higgs field $\Phi$ is only assumed to be parabolic, i.e. $\Phi(E_{p,i}) \subset E_{p,i} \otimes \left.K(D)\right|_p$ for all $p \in D$. However in this paper we will always assume that the Higgs field is strongly parabolic. 

\begin{definition}
A subbundle $F_* \subset E_*$ is called $\Phi$\textit{-invariant} if $\Phi(F_*)\subset F_*\otimes K(D)$.
\end{definition}

\begin{definition}
A parabolic Higgs bundle $(E_*,\Phi)$ is called \textit{semistable} (resp. \textit{stable}) if every nonzero proper $\Phi$-invariant subbundle $F_* \subset E_*$ satisfies
\[
  \mu_{\mathrm{par}}(F_*) \leq \mu_{\mathrm{par}}(E_*) \hspace{0.2cm} (\mathrm{resp. } \hspace{0.2cm} <).
\]
\end{definition}

The moduli space $\mathcal{M}_{\mathrm{Higgs}}(\alpha,r,d)$ of stable parabolic Higgs bundles of fixed rank $r$, degree $d$ and parabolic structure $\alpha$ was constructed by Yokogawa in \cite{Y93} (see \cite{BY96} for more details). It is a normal quasi-projective complex variety of dimension
\[
\dim\mathcal{M}_{\mathrm{Higgs}}(\alpha,r,d) = 2\dim \mathcal{M}(\alpha,r,d).
\]

The parabolic version of the Serre duality (see \cite{BY96}, \cite{Y95}) says that 
\[
H^1(\mathrm{PEnd}(E_*)) \cong H^0(\mathrm{SPEnd}(E_*) \otimes K(D))^*.
\]
Therefore, there is an open embedding $T^*\mathcal{M}(\alpha,r,d) \xhookrightarrow{}  \mathcal{M}_{\mathrm{Higgs}}(\alpha,r,d)$ and that is why the dimension of $\mathcal{M}_{\mathrm{Higgs}}(\alpha,r,d)$ is twice the dimension of $\mathcal{M}(\alpha,r,d)$. Thus the moduli space of parabolic Higgs bundles $\mathcal{M}_{\mathrm{Higgs}}(\alpha,r,d)$ has a symplectic structure induced from the natural symplectic structure of the cotangent space.

Let $\mathrm{Jac}^d(X)$ denote the space of degree $d$ line bundles over $X$. Consider the determinant map
\begin{align*}
    \mathrm{det} : \mathcal{M}_{\mathrm{Higgs}}(\alpha,r,d) &\longrightarrow \mathrm{Jac}^d(X) \times H^0(X,K)\\
    (E_*,\Phi) &\longmapsto (\wedge^rE_*,\mathrm{trace}(\Phi)).
\end{align*}
Since $\Phi$ is strongly parabolic, $\mathrm{trace}(\Phi) \in H^0(X,K)\subset H^0(X,K(D))$. The moduli space $\mathcal{M}^{\xi}_{\mathrm{Higgs}}(\alpha,r,d)$ of stable parabolic Higgs bundles with fixed determinant $\xi$ is defined by the fiber $\mathrm{det}^{-1}(\xi,0)$, i.e.
\[
\mathcal{M}^{\xi}_{\mathrm{Higgs}}(\alpha,r,d) \coloneqq \mathrm{det}^{-1}(\xi,0).
\]
If the Higgs field is zero, then the dimension of the moduli space $\mathcal{M}^{\xi}_{\mathrm{Higgs}}(\alpha,r,d)$ is given by
\[
\dim\mathcal{M}^{\xi}_{\mathrm{Higgs}}(\alpha,r,d) = 2(g-1)(r^2-1)+nr(r-1).
\]
\subsection{Spectral correspondence}
We will give a description of the spectral correspondence for the moduli space $\mathcal{M}^{\xi}_{\mathrm{Higgs}}(\alpha,r,d)$ (although a similar description can be given for the moduli space $\mathcal{M}_{\mathrm{Higgs}}(\alpha,r,d)$). 

Let $p : \mathrm{Tot}(K(D)) \to X$ be the natural projection from the total space of $K(D)$ to $X$ and let $x \in H^0(\mathrm{Tot}(K(D)),p^*K)$ denote the tautological section. Since the Higgs field $\Phi$ is strongly parabolic, the residue at every point of $D$ is nilpotent. Therefore the trace of the map $$\wedge^i\Phi : \wedge^iE_* \to \wedge^iE_* \otimes K(D)^i$$ lies in $K^i(D^{i-1})$ for each $2\leq i \leq r$, where $K^i(D^{j})$ denote the tensor product of the $i$-th power of $K$ and the $j$-th power of $\mathcal{O}(D)$. The coefficients of the characteristic polynomial of $\Phi$ are precisely given by $s_i=\mathrm{trace}(\wedge^i\Phi)$. Therefore, we have the $\textit{Hitchin map}$
\begin{align*}
	h_{\mathrm{par}} : \mathcal{M}^{\xi}_{\mathrm{Higgs}}(\alpha,r,d) &\longrightarrow \mathcal{H}_{\mathrm{par}} \coloneqq \bigoplus_{i=2}^{r} H^0(X, K^{i}(D^{i-1}))\\
	(E_*,\Phi) &\longmapsto (s_2,\dots , s_{r}).
\end{align*}
By Riemann-Roch theorem, the dimension of the base $\mathcal{H}_{\mathrm{par}}$ is 
\[
r^2(g-1) +\dfrac{nr(r-1)}{2}, 
\]
which is same as the half the dimension of the moduli space $\mathcal{M}^{\xi}_{\mathrm{Higgs}}(\alpha,r,d)$.

Given $s=(s_2,s_3\dots, s_{r}) \in \mathcal{H}_{\mathrm{par}}$ with 
\[
s_i \in H^0(X, K^{i}(D^{i-1})) \subset H^0(X, K(D)^i),
\]
the \textit{spectral curve} $X_s$ in $\mathrm{Tot}(K(D))$ is defined by
\[
x^{r} + \tilde{s}_2 x^{r-2} + \tilde{s}_3 x^{r-3} \cdots + \tilde{s}_{r}=0
\]
where $\tilde{s}_i = p^*(s_i)$ and $x \in H^0(\mathrm{Tot}(K(D)),p^*K)$ is the tautological section. Let $$\pi : X_s \to X$$ be the restriction of the projection $p$. For a generic point $s \in \mathcal{H}_{\mathrm{par}}$, the spectral curve $X_s$ is smooth and by \cite{GL11} the fiber $h_{\mathrm{par}}^{-1}(s)$ of the Hitchin map is isomorphic to 
\[
\mathrm{Prym}(X_s/X) = \{L \in \mathrm{Pic}(X) : \det \pi_*L \cong \xi\}.
\]

\subsection{Parabolic $\mathrm{GSp}(2m,\mathbb{C})$-Higgs bundles}	
	Let us consider the standard symplectic form
\[
J = \begin{bmatrix} 0 & I_{m} \\ -I_{m} & 0 \end{bmatrix}
\]
on $\mathbb{C}^{2m}$. Then the general symplectic group defined by
\[
\mathrm{GSp}(2m,\mathbb{C}) = \{ A \in \mathrm{GL}(2m,\mathbb{C}) : AJA^t=\lambda_A J \text{  for some  } \lambda_A \in \mathbb{C}^*\}
\]
is an extension of $\mathbb{C}^*$ by the symplectic group $\mathrm{Sp}(2m,\mathbb{C})$, i.e. there exist an exact sequence
\[
1 \to \mathrm{Sp}(2m,\mathbb{C}) \xrightarrow{} \mathrm{GSp}(2m,\mathbb{C}) \xrightarrow[]{p} \mathbb{C}^* \xrightarrow[]{} 1,
\]
where $p(A)=\lambda_A$. Therefore, $\det(A)=p(A)^m=\lambda_A^m$ for every $A \in \mathrm{GSp}(2m,\mathbb{C})$.\\
The Lie algebra of $\mathrm{GSp}(2m,\mathbb{C})$ is given by 
\[
\mathfrak{gsp}(2m,\mathbb{C}) = \{M \in \mathfrak{gl}(2m,\mathbb{C}) :  MJ + JA^t = \frac{\mathrm{tr}(M)}{m}J\} \cong \mathfrak{sp}(2m,\mathbb{C}) \oplus \mathbb{C}.
\]
The decomposition $M = N + \frac{\mathrm{tr}(M)}{m}I_{2m}$, with $N \in \mathfrak{sp}(2m,\mathbb{C})$ produces the above isomorphism. 

Let $L$ be a line bundle over $X$ of degree $l$. Let $E_*$ be a parabolic bundle and let
\begin{equation}\label{bilinear}
\varphi : E_* \otimes E_* \to L
\end{equation}
be a homomorphism of parabolic bundles. The trivial line bundle $\mathcal{O}_X$ equipped with the trivial parabolic structure is realized as a parabolic subbundle of $E_* \otimes E^\vee_*$ by sending a locally defined function $f$ to the locally defined endomorphism of $E$ given by pointwise multiplication with $f$. Let
\begin{equation}\label{nondegenerate}
\tilde{\varphi} : E_* \to L \otimes E^\vee_*
\end{equation}
be the homomorphism defined by the composition
\[
E_* = E_* \otimes \mathcal{O}_X  \xhookrightarrow{} E_* \otimes (E_* \otimes E^\vee_*) = (E_* \otimes E_*) \otimes E^\vee_* \xrightarrow{\varphi \otimes Id} L \otimes E^\vee_*.
\]
\begin{definition}\label{maindefn}
	 A \textit{symplectic parabolic bundle} is a pair $(E_*,\varphi)$ of the above form such that $\varphi$ is anti-symmetric and the homomorphism $\tilde{\varphi}$ is an isomorphism. 
\end{definition}
Suppose $E$ is the underlying vector bundle of a symplectic parabolic bundle $(E_*,\varphi)$. The tensor product $E \otimes E$ is a coherent subsheaf of the vector bundle underlying the parabolic bundle $E_*\otimes E_*$. Therefore, $\varphi$ induces a homomorphism
\begin{equation}\label{underlying}
\hat{\varphi} : E \otimes E \to L
\end{equation}
of vector bundles.
\begin{definition}
A subbundle $F \subset E$ of the underlying bundle of $(E_*,\varphi)$ is called \textit{isotropic} if $\hat{\varphi}(F \otimes F)=0$.
\end{definition}
 
 A parabolic Higgs field $\Phi$ on a symplectic parabolic bundle  $(E_*,\varphi)$ is said to be \textit{compatible} with  $\varphi$ if $\tilde{\varphi}$ takes $\Phi$ to the induced parabolic Higgs field on $L \otimes E^\vee_*$ (we are considering the zero section as the Higgs field on $L$). We can describe this compatibility condition locally. A strongly parabolic Higgs field $\Phi$ on $E_*$ can be viewed as a holomorphic section of $\mathrm{SPEnd}(E_*) \otimes K(D)$. Let $s$ and $t$ be any holomorphic sections of $E_*$ defined over an open subset $U \subset X$. Consider
\[
\hat{\varphi}_{\Phi}(s,t) \coloneqq \hat{\varphi}(\Phi(s)\otimes t)+ \hat{\varphi}(s \otimes \Phi(t)) \in \Gamma(U,L\otimes K(D)),
\]
where $\hat{\varphi}$ is the pairing defined in \ref{underlying}. The Higgs field $\Phi$ is said to be compatible with $\phi$ if and only if $\hat{\varphi}_{\Phi}(s,t)=0$ for all sections $s$ and $t$.
 \begin{definition}
A \textit{symplectic parabolic Higgs bundle} $(E_*,\varphi,\Phi)$ is a symplectic parabolic bundle $(E_*,\varphi)$  equipped with a parabolic Higgs field $\Phi$ on $E_*$ which is compatible with $\varphi$.
\end{definition}

When the parabolic weights are all rational, the notion of symplectic parabolic Higgs bundle is equivalent to the notion of parabolic $\mathrm{GSp}(2m,\mathbb{C})$-Higgs bundles (see \cite{BMW11}).

\subsection{Parabolic $\mathrm{GSO}(2m,\mathbb{C})$-Higgs bundles}
Let $B$ be a non-degenerate symmetric bilinear form on $\mathbb{C}^{2m}$. Also for notational convenience, we denote the corresponding symmetric matrix by $B$. Then the general (even) orthogonal group
\[
\mathrm{GO}(2m,\mathbb{C}) = \{ A \in \mathrm{GL}(2m,\mathbb{C}) : ABA^t=\lambda_A J \text{  for some  } \lambda_A \in \mathbb{C}^*\}.
\]
therefore,, we have $(\det(A))^2=\lambda_A^{2m}$ for every $A \in \mathrm{GO}(2m,\mathbb{C})$.

In this case, there is a \textit{sgn} morphism
\[
\textit{sgn} : \mathrm{GO}(2m,\mathbb{C}) \longrightarrow \{\pm 1\}
\]
sending $A$ to $\det(A)/\lambda_A^m$. The general special orthogonal group is the kernel of this \textit{sgn} morphism and it is denoted by $\mathrm{GSO}(2m,\mathbb{C})= \mathrm{ker}(\textit{sgn})$. So we have a short exact sequence
\[
1 \to \mathrm{GSO}(2m,\mathbb{C}) \xrightarrow{} \mathrm{GO}(2m,\mathbb{C}) \xrightarrow[]{\textit{sgn}} \{\pm 1\} \xrightarrow[]{} 1.
\]

\begin{definition}\label{maindefn}
	 An \textit{orthogonal parabolic bundle} is a pair $(E_*,\varphi)$, where $\varphi$ (as in \ref{bilinear}) is symmetric and the homomorphism $\tilde{\varphi}$ (as in \ref{nondegenerate}) is an isomorphism. 
\end{definition}	 
 \begin{definition}
An \textit{orthogonal parabolic Higgs bundle} $(E_*,\varphi,\Phi)$ is an orthogonal parabolic bundle $(E_*,\varphi)$  equipped with a parabolic Higgs field $\Phi$ on $E_*$ which is compatible with $\varphi$.
\end{definition}
As in the $\mathrm{GSp}$-case, for rational weights the notions of (even) orthogonal parabolic Higgs bundles and parabolic $\mathrm{GSO}(2m,\mathbb{C})$-Higgs bundles coincide.

\subsection{Moduli space}

\begin{definition}
	A symplectic/orthogonal parabolic Higgs bundle $(E_*,\varphi,\Phi)$ is said to be \textit{semistable} (resp. \textit{stable}) if every nonzero isotropic subbundle $F \subset E$ such that $\Phi(F_*) \subset F_* \otimes K(D)$ satisfies
	\[
	\mu_{par}(F_*) \leq \mu_{par}(E_*) \hspace{0.4cm}(\text{resp.} \hspace{0.15cm}  < )
	\]
	holds, where $F_*\subset E_*$ has the induced parabolic structure.
\end{definition}

The moduli space $\mathcal{M}_G(\alpha)$ of stable parabolic $G$-bundles of a fixed topological type and with a fixed parabolic structure $\alpha$ is a normal quasi-projective variety (see \cite{BBN01}, \cite{BR89}) of dimension
\[
\dim\mathcal{M}_G(\alpha)= \dim Z(G)+ (g-1)\dim(G) + n\dim(G/B),
\]
where $Z(G)$ denotes the the center of $G$ and $n$ is the number of parabolic points. The last summand comes from the fact that the flags we are considering over each point of $D$ are full flags and $B$ is the Borel subgroup of $G$ determined by $\alpha$. The moduli space $\mathcal{M}_{G\mathrm{-Higgs}}(\alpha)$ of stable parabolic $G$-Higgs bundles (see \cite{R16}) is a normal quasi-projective variety of dimension 
\[
\dim \mathcal{M}_{G\mathrm{-Higgs}}(\alpha) = 2\dim \mathcal{M}_G(\alpha).
\]
In particular when $G=\mathrm{GSp}(2m,\mathbb{C})$, the moduli space $\mathcal{M}_{\mathrm{GSp-Higgs}}(\alpha,2m,d)$ of stable parabolic $\mathrm{GSp}(2m,\mathbb{C})$-Higgs bundle of fixed degree $d$ has dimension
\[
\dim \mathcal{M}_{\mathrm{GSp-Higgs}}(\alpha,2m,d) = 2m(2m+1)(g-1) + 2m^2n.
\]
Similarly, the moduli space $\mathcal{M}_{\mathrm{GSO-Higgs}}(\alpha,2m,d)$ of stable parabolic $\mathrm{GSO}(2m,\mathbb{C})$-Higgs bundle of fixed degree $d$ has dimension
\[
\dim \mathcal{M}_{\mathrm{GSO-Higgs}}(\alpha,2m,d) = 2m(2m-1)(g-1) + 2mn(m-1).
\]

\subsection*{Notation:} From now on, for notational convenience we shall denote a parabolic bundle $E_*$ by $E$.

\section{Parabolic $\mathrm{PSp}(2m,\mathbb{C})$-Higgs bundles}
In this section we will discuss the Hitchin fibration for the moduli space of parabolic $\mathrm{PSp}(2m,\mathbb{C})$-Higgs bundles. The projective symplectic group $\mathrm{PSp}(2m,\mathbb{C})$ is given by the following exact sequence:
\begin{equation*}
1 \xrightarrow{} \mathbb{C}^* \xrightarrow{c \to cI_{2m}} \mathrm{GSp}(2m,\mathbb{C}) \xrightarrow{} \mathrm{PSp}(2m,\mathbb{C})  \xrightarrow{} 1.
\end{equation*}
The sheaf version of this sequence induces the following exact sequence in homology :
\begin{equation}
   H^1(X,\mathcal{O}_X^*) \xrightarrow{} H^1(X,\mathrm{GSp}(2m,\mathcal{O}_X)) \xrightarrow{q} H^1(X,\mathrm{PSp}(2m,\mathcal{O}_X)) \to 0
\end{equation}
The surjectivity of the map $q$ implies that there is a bijective correspondence between the parabolic $\mathrm{PSp}(2m,\mathbb{C})$-bundles and the equivalence classes of parabolic $\mathrm{GSp}(2m,\mathbb{C})$-bundles with respect to the action given by the tensor product of line bundles on the associated bundles. If $V$ is a parabolic $\mathrm{PSp}(2m,\mathbb{C})$-bundle and $\Tilde{V}$ is a parabolic $\mathrm{GSp}(2m,\mathbb{C})$-bundle such that $q(\Tilde{V}) = V$, then we call that $\Tilde{V}$ is a lifting of $V$ to a parabolic $\mathrm{GSp}(2m,\mathbb{C})$-bundle.

The group $\mathrm{PSp}(2m,\mathbb{C})$ can also be defined by the quotient of the symplectic group $\mathrm{Sp}(2m,\mathbb{C})$ by the action of a finite group by the following exact sequence :
\begin{equation*}
1 \xrightarrow{} \{\pm 1\} \xrightarrow{1 \to I_{2m}} \mathrm{Sp}(2m,\mathbb{C}) \xrightarrow{}  \mathrm{PSp}(2m,\mathbb{C}) \xrightarrow{} 1.
\end{equation*}
Since it is a quotient by a finite group, the Lie algebras are equal, i.e.
\begin{equation}\label{equalLie}
\mathfrak{sp}(2m,\mathbb{C}) = \mathfrak{psp}(2m,\mathbb{C}).
\end{equation}

Consider a parabolic $\mathrm{PSp}(2m,\mathbb{C})$-Higgs bundle $(V,\eta)$ which lifts to a parabolic $\mathrm{GSp}(2m,\mathbb{C})$-Higgs bundle $(\Tilde{V},\Tilde{\eta})$, i.e. $q(\Tilde{V},\Tilde{\eta}) = (V,\eta)$. Let $(E,\Phi,\varphi,L)$ be the bundle corresponding to $(\Tilde{V},\Tilde{\eta})$. Then $(V,\eta)$ corresponds to the equivalence class $[(E,\Phi,\varphi,L)]$ where the equivalence relation $\sim_{\mathrm{Jac}(X)}$ is given by :
\[
(E,\Phi,\varphi,L) \sim_{\mathrm{Jac}(X)} (E\otimes M,\Phi \otimes \mathrm{Id}_M,\varphi_M,L\otimes M^2) \hspace{1cm} \mathrm{for} \hspace{0.1cm} \mathrm{any} \hspace{0.2cm} M\in {\mathrm{Jac}(X)}
\]
where 
\[
\varphi_M : (E\otimes M) \otimes (E\otimes M) \to L \otimes M^2
\]
is the induced symplectic form on $E\otimes M$ taking values in $L\otimes M^2$. 


Let $M_{\mathrm{GSp-Higgs}}(\alpha,2m) = \coprod_{d\in \mathbb{Z}} \mathcal{M}_{\mathrm{GSp-Higgs}}(\alpha,2m,d)$ denote the moduli stack of parabolic $\mathrm{GSp}$-Higgs bundles of fixed rank $2m$ with any degree. Then the $\mathrm{Jac}(X)$-orbit $[(E,\Phi,\varphi,L)]$ of $(E,\Phi,\varphi,L)$ under the action of $\mathrm{Jac}(X)$ on the stack $M_{\mathrm{GSp-Higgs}}(\alpha,2m)$ is defined by:
\begin{align*}
    M_{\mathrm{GSp-Higgs}}(\alpha,2m) \times \mathrm{Jac}(X) &\longrightarrow M_{\mathrm{GSp-Higgs}}(\alpha,2m) \\
    ((E,\Phi,\varphi,L),M) &\mapsto (E\otimes M,\Phi \otimes \mathrm{Id}_M,\varphi_M,L\otimes M^2).
\end{align*}

Note that $\deg(L\otimes M^2) = \deg(L)+2\deg(M)$, i.e. it changes the degree of $L$ by a multiple of $2$. For a parabolic $\mathrm{PSp}(2m,\mathbb{C})$- Higgs bundle $[(E,\Phi,\varphi,L)]$, the isomorphism $E \cong E^\vee \otimes L$ implies that $\mathrm{pardeg}(E) = m\deg(L)$. So the parabolic degree of $E$ is determined by the degree of $L$. Therefore, 
\begin{align*}
\mathrm{pardeg}(E\otimes M) &= m\deg(L\otimes M^2) \\ &=m\deg(L) + 2m\deg(M)\\ &= \mathrm{pardeg}(E) + 2m\deg(M).
\end{align*}
Therefore, the parabolic degree of a parabolic $\mathrm{PSp}(2m,\mathbb{C})$-Higgs bundle can be defined as follows :

\begin{definition}
Let $(V,\eta)$ be a parabolic $\mathrm{PSp}(2m,\mathbb{C})$-Higgs bundle and let $(\Tilde{V},\Tilde{\eta})$ be a lifting to a parabolic $\mathrm{GSp}(2m,\mathbb{C})$-Higgs bundle. Let $(E,\Phi,\varphi,L)$ be the datum corresponding to $(\Tilde{V},\Tilde{\eta})$ with parabolic degree $ml$, where $l=\deg(L)$. Then the $\textit{parabolic degree}$ of $(V,\eta)$ is given by the class $\overline{ml} \in \mathbb{Z}/m\mathbb{Z}$.
\end{definition}

Threfore, we will consider the moduli space $\mathcal{M}_{\mathrm{PSp-Higgs}}(\alpha,2m,\overline{ml})$ of stable parabolic $\mathrm{PSp}(2m,\mathbb{C})$-Higgs bundles of fixed rank $2m$ and parabolic degree $\overline{ml}$.

Consider a basis $\{s_{2i}\}_{i =1,\dots , m}$ of invariant polynomials of the lie algebra $\mathfrak{sp}(2m,\mathbb{C}) = \mathfrak{psp}(2m,\mathbb{C})$, where $s_{2i}=\mathrm{tr}(\wedge^{2i}\eta)$. Therefore the Hitchin morphism for the moduli space of parabolic $\mathrm{PSp}(2m,\mathbb{C})$-Higgs bundles is of the form
\begin{align*}
h_{\mathrm{PSp-par}} : \mathcal{M}_{\mathrm{PSp-Higgs}}(\alpha,2m,\overline{ml}) &\longrightarrow \mathcal{H}_{\mathrm{PSp-par}} \coloneqq \bigoplus_{i=1}^{m} H^0(X, K^{2i}(D^{2i-1}))\\
	(V,\eta) &\longmapsto (s_2,\dots , s_{2m}).
\end{align*}
Observe that if $(\Tilde{V},\Tilde{\eta})$ is a lifting of $(V,\eta)$ to a parabolic $\mathrm{GSp}(2m,\mathbb{C})$-Higgs bundle then 
\[
h_{\mathrm{GSp-par}}(\Tilde{V},\Tilde{\eta}) = h_{\mathrm{PSp-par}}(V,\eta).
\]
Let $s=(s_2,s_4,\dots , s_{2m}) \in \mathcal{H}_{\mathrm{PSp-par}}$ be a generic point of the Hitchin base. The spectral curve 
\[
\pi : X_s \to X
\]
is defined by the equation
\[
x^{2m} + s_2x^{2m-2} + s_4x^{2m-4}+ \cdots + s_{2m} = 0.
\]
For a generic $s\in \mathcal{H}_{\mathrm{PSp-par}}$, the corresponding spectral curve $X_s$ is smooth (see \cite{BNR89}).
Since all odd coefficients of the above equation are zero, the spectral curve $X_s$ possesses an involution $\sigma : X_s \to X_s$ defined by $\sigma(\lambda)=-\lambda$. Therefore, we can define a $2$-fold covering map
\[
q : X_s \to X_s/\sigma.
\]
Since $\sigma$ sends a degree zero line bundle on $X_s$ to a degree zero line bundle, it acts on the Jacobian $\text{Jac}(X_s)$. The \textit{Prym variety} $P_{s,\sigma}\coloneqq \mathrm{Prym}(X_s, X_s/\sigma)$ is given by 
	\[
	P_{s,\sigma} \coloneqq \mathrm{Prym}(X_s, X_s/\sigma) = \{N \in \text{Jac}(X_s): \sigma^*N \cong N^\vee\}
	\]
	and it is of dimension 
	\[
	\dim P_{s,\sigma}=g(X_s) - g(X_s/\sigma).
	\]
Following \cite[Theorem 4.1]{R20},  we can give a different description of the Prym variety. Let $J \in P_{s,\sigma}$ be an element in the Prym variety. Consider the line bundle 
\[
U=J \otimes R^\vee,
\]
where $R = (K_{X_s} \otimes \pi^*K^\vee \otimes \pi^*L^\vee)^{1/2}$ is a holomorphic square root. Then it follows that $U$ satisfies the isomorphism
\begin{equation}\label{alternative}
\sigma^*U \cong U^\vee \otimes (K_{X_s} \otimes \pi^*K^\vee)^{-1} \otimes \pi^*L.
\end{equation}

Similarly, let $U \in \text{Jac}(X_s)$ be a line bundle satisfying the equation (\ref{alternative}). Then the line bundle $J = U \otimes R \in P_{s,\sigma}$ is an element in the Prym variety. Therefore, there is a bijective correspondence between the Prym variety $P_{s,\sigma}$ and 
\begin{equation}\label{prym}
    \Omega_{s,\sigma} \coloneqq \{(U,L,\tau)\hspace{0.1cm}|\hspace{0.1cm} U \in \text{Jac}(X_s), \tau : \sigma^*U \cong U^\vee \otimes (K_{X_s} \otimes \pi^*K^\vee)^{-1} \otimes \pi^*L \}.
\end{equation}

From now on, we will refer an element in the Prym variety as an element of $\Omega_{s,\sigma}$.

The Jacobian $\mathrm{Jac}(X)$ acts on $\Omega_{s,\sigma}$ as follows:\ 
\begin{align*}
    \Omega_{s,\sigma} \times \mathrm{Jac}(X) &\longrightarrow \Omega_{s,\sigma}\\
    ((U,L,\tau),M) &\longmapsto (U \otimes \pi^*M, L \otimes M^2, \tau_M) 
\end{align*}
where $\tau_M= \tau \otimes \text{Id}_{\pi^*M}$ is the following isomorphism
\begin{align*}
    \sigma^*(U \otimes \pi^*M) &\cong \sigma^*U \otimes \pi^*M\\
    &\cong U^\vee \otimes (K_{X_s} \otimes \pi^*K^\vee)^{-1} \otimes \pi^*L \otimes \pi^*M\\
    &\cong U^\vee \otimes \pi^*M^\vee \otimes \pi^*M \otimes (K_{X_s} \otimes \pi^*K^\vee)^{-1} \otimes \pi^*L \otimes \pi^*M\\
    &\cong (U\otimes \pi^*M)^\vee \otimes (K_{X_s} \otimes \pi^*K^\vee)^{-1} \otimes \pi^*(L\otimes M^2).
\end{align*}
\begin{theorem}\label{PSp}
For a generic point $s\in \mathcal{H}_{\mathrm{PSp-par}}$, the fiber $h_{\mathrm{PSp-par}}^{-1}(s)$ is isomorphic to the quotient $\Omega_{s,\sigma}/\mathrm{Jac}(X)$.
\end{theorem}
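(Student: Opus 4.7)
The plan is to combine the spectral correspondence for parabolic $\mathrm{GSp}(2m,\mathbb{C})$-Higgs bundles (Theorem 4.1 of \cite{R20}, which underlies the identification of $\Omega_{s,\sigma}$ with $P_{s,\sigma}$) with the description of parabolic $\mathrm{PSp}(2m,\mathbb{C})$-Higgs bundles as $\mathrm{Jac}(X)$-equivalence classes of parabolic $\mathrm{GSp}(2m,\mathbb{C})$-Higgs data. Since $h_{\mathrm{GSp\text{-}par}}(\tilde V,\tilde\eta)=h_{\mathrm{PSp\text{-}par}}(V,\eta)$ and the Hitchin characteristics are invariant under the twist $(E,\Phi,\varphi,L)\mapsto (E\otimes M,\Phi\otimes \mathrm{Id}_M,\varphi_M,L\otimes M^2)$ (because $\mathrm{tr}(\wedge^{2i}(\Phi\otimes \mathrm{Id}_M))=\mathrm{tr}(\wedge^{2i}\Phi)$), the two Hitchin maps fit into a commutative square with the quotient map $q$, and hence $h_{\mathrm{PSp\text{-}par}}^{-1}(s)\cong h_{\mathrm{GSp\text{-}par}}^{-1}(s)/\mathrm{Jac}(X)$.

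Next I would apply the parabolic GSp spectral correspondence to identify $h_{\mathrm{GSp\text{-}par}}^{-1}(s)$ with $\Omega_{s,\sigma}$ for generic $s$. Under this bijection, $(E,\Phi,\varphi,L)$ is sent to $(U,L,\tau)$, where $E\cong \pi_*J$ for the spectral line bundle $J=U\otimes R$, and $\tau$ is the isomorphism \eqref{alternative} forced by the non-degenerate pairing $\varphi$. The crux is to check that this bijection is equivariant for the two $\mathrm{Jac}(X)$-actions. By the projection formula, $\pi_*(J\otimes \pi^*M)\cong (\pi_*J)\otimes M=E\otimes M$, so shifting $U$ by $\pi^*M$ on the spectral side corresponds to shifting $E$ by $M$ downstairs; the pairing $\varphi$ is then replaced by $\varphi_M$ taking values in $L\otimes M^2$, and a short diagram chase using the defining formula for $R$ shows that the compatibility isomorphism becomes $\tau\otimes \mathrm{Id}_{\pi^*M}$, which is exactly $\tau_M$ as displayed in the chain of isomorphisms preceding the theorem.

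Combining these two steps yields the claim
\[
h_{\mathrm{PSp\text{-}par}}^{-1}(s)\;\cong\; h_{\mathrm{GSp\text{-}par}}^{-1}(s)/\mathrm{Jac}(X)\;\cong\; \Omega_{s,\sigma}/\mathrm{Jac}(X).
\]
The main obstacle is this equivariance check — in particular, confirming that $\tau$ really transforms to $\tau_M$ rather than to an additional twist. This is a line-bundle bookkeeping exercise: one must choose the square roots $R=(K_{X_s}\otimes \pi^*K^\vee\otimes \pi^*L^\vee)^{1/2}$ compatibly for $L$ and $L\otimes M^2$ (any two compatible choices differ by a $2$-torsion line bundle, which is absorbed into the Jacobian action on $X_s$) so that the substitution $L\rightsquigarrow L\otimes M^2$, $J\rightsquigarrow J\otimes \pi^*M$ indeed produces the shift $U\rightsquigarrow U\otimes \pi^*M$ after correcting by $R$, and so that the isomorphism $\tau$ is transported by tensoring with $\mathrm{Id}_{\pi^*M}$. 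Once this is verified the theorem follows.
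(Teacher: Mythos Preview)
Your proposal is correct and follows essentially the same route as the paper: lift a parabolic $\mathrm{PSp}$-Higgs bundle to a parabolic $\mathrm{GSp}$-Higgs datum $(E,\Phi,\varphi,L)$, invoke \cite[Theorem~4.1]{R20} to identify the $\mathrm{GSp}$-fiber with $\Omega_{s,\sigma}$, and then use the projection formula to check that the $\mathrm{Jac}(X)$-twist on the Higgs side matches the $\mathrm{Jac}(X)$-action on $\Omega_{s,\sigma}$, yielding $h_{\mathrm{PSp\text{-}par}}^{-1}(s)\cong \Omega_{s,\sigma}/\mathrm{Jac}(X)$. Your additional care about the compatibility of the square-root choices $R$ as $L$ varies is a refinement the paper does not spell out, but it does not alter the argument.
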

\begin{proof}
Let $(V,\eta) \in h_{\mathrm{PSp-par}}^{-1}(s)$ be a parabolic $\mathrm{PSp}(2m,\mathbb{C})$-Higgs bundle in the fiber of the Hitchin morphism and let $(\Tilde{V},\Tilde{\eta})$ be a lifting to a parabolic $\mathrm{GSp}(2m,\mathbb{C})$-Higgs bundle which corresponds to the datum of $(E,\Phi,\varphi,L)$. Then $(V,\eta)$ corresponds to the datum of the $\mathrm{Jac}(X)$-orbit $[(E,\Phi,\varphi,L)]$ uniquely. By \cite[Theorem 4.1]{R20}, the datum of $(E,\Phi,\varphi,L)$ corresponds to an element of the Pyrm variety $\Omega_{s,\sigma}$ via the spectral correspondence. Consider an element $M \in \mathrm{Jac}(X)$, i.e. a degree zero line bundle over $X$. Then by the projection formula the datum of $(E \otimes M,\Phi \otimes \text{Id}_M,\varphi_M,L\otimes M^2)$ corresponds uniquely to an element of $\Omega_{s,\sigma}/\mathrm{Jac}(X)$. Therefore, we conclude that the datum of the  $\mathrm{Jac}(X)$-orbit $[(E,\Phi,\varphi,L)]$ corresponds uniquely to the datum of the $\mathrm{Jac}(X)$-orbit of an element of $\Omega_{s,\sigma}$ via the spectral correspondence.
\end{proof}

\section{Parabolic $\mathrm{PSO}(2m,\mathbb{C})$-Higgs bundles}\label{PSO}
As in the previous section, there is a bijective correspondence between the parabolic $\mathrm{PSO}(2m,\mathbb{C})$-bundles and the equivalence classes of parabolic $\mathrm{GSO}(2m,\mathbb{C})$-bundles. So, for  every parabolic $\mathrm{PSO}(2m,\mathbb{C})$-bundle $V$ there is a lifting $\Tilde{V}$ of $V$ to a parabolic $\mathrm{GSO}(2m,\mathbb{C})$-bundle.

The group $\mathrm{PSO}(2m,\mathbb{C})$ can be defined by the quotient of $\mathrm{SO}(2m,\mathbb{C})$ by the action of a finite group by the following exact sequence :
\begin{equation*}
1 \xrightarrow{} \{\pm 1\} \xrightarrow{1 \to I_{2m}} \mathrm{SO}(2m,\mathbb{C}) \xrightarrow{}  \mathrm{PSO}(2m,\mathbb{C}) \xrightarrow{} 1.
\end{equation*}
Therefore, we have
\begin{equation}\label{equalLie}
\mathfrak{so}(2m,\mathbb{C}) = \mathfrak{pso}(2m,\mathbb{C}).
\end{equation}

Let $(V,\eta)$ be a parabolic $\mathrm{PSO}(2m,\mathbb{C})$-Higgs bundle lifting to the parabolic $\mathrm{GSO}(2m,\mathbb{C})$-Higgs bundle $(\Tilde{V},\Tilde{\eta})$ and let $(E,\Phi,\varphi,L)$ be the bundle corresponding to $(\Tilde{V},\Tilde{\eta})$. Then $(V,\eta)$ corresponds to the equivalence class $[(E,\Phi,\varphi,L)]$ where the equivalence relation $\sim_{\mathrm{Jac}(X)}$ is given by :
\[
(E,\Phi,\varphi,L) \sim_{\mathrm{Jac}(X)} (E\otimes M,\Phi \otimes \mathrm{Id}_M,\varphi_M,L\otimes M^2) \hspace{1cm} \mathrm{for} \hspace{0.1cm} \mathrm{any} \hspace{0.2cm} M\in {\mathrm{Jac}(X)}
\]
where 
\[
\varphi_M : (E\otimes M) \otimes (E\otimes M) \to L \otimes M^2
\]
is the induced symmetric bilinear nondegenerate form on $E\otimes M$ taking values in $L\otimes M^2$. 

As in the previous case, the parabolic degree of a parabolic $\mathrm{PSO}(2m,\mathbb{C})$-Higgs bundle can be defined as follows:

\begin{definition}
Let $(V,\eta)$ be a parabolic $\mathrm{PSO}(2m,\mathbb{C})$-Higgs bundle and let $(\Tilde{V},\Tilde{\eta})$ be a lifting to a parabolic $\mathrm{GSO}(2m,\mathbb{C})$-Higgs bundle. Let $(E,\Phi,\varphi,L)$ be the datum corresponding to $(\Tilde{V},\Tilde{\eta})$ with parabolic degree $ml$, where $l=\deg(L)$. Then the $\textit{parabolic degree}$ of $(V,\eta)$ is given by the class $\overline{ml} \in \mathbb{Z}/m\mathbb{Z}$.
\end{definition}

Let $\{s_{2i}\}_{i =1,\dots , m}$ be a basis of invariant polynomials of the lie algebra $\mathfrak{so}(2m,\mathbb{C}) = \mathfrak{pso}(2m,\mathbb{C})$
In this case, the coefficient $s_{2m}$ is a square of a polynomial $p_m\in H^0(X,K(D)^m)$, the Pfaffian, of degree $m$. A basis for the invariant polynomials on the Lie algebra $\mathfrak{so}(2m,\mathbb{C}) = \mathfrak{pso}(2m,\mathbb{C})$ is given by the coefficients $\{s_2, ..., s_{2m-2},p_m\}$. Therefore, the Hitchin map for the moduli space of parabolic $\mathrm{PSO}(2m,\mathbb{C})$-Higgs bundles is given by
\begin{align*}
h_{\mathrm{PSO-par}} : \mathcal{M}_{\mathrm{PSO-Higgs}}(\alpha,2m,\overline{ml}) &\longrightarrow \mathcal{H}_{\mathrm{PSO-par}} \coloneqq \bigoplus_{i=1}^{m-1} H^0(X, K^{2i}(D^{2i-1})) \oplus H^0(X,K(D)^{m})\\
	(V,\eta) &\longmapsto (s_2,\dots , s_{2m-2},p_m).
\end{align*} 
For $s=(s_2,\dots, s_{2m-2},p_m)\in \mathcal{H}_{\mathrm{PSO-par}}$, the corresponding spectral curve $X_s$ is given by the equation
\[
x^{2m} + s_2x^{2m-2} + \cdots + s_{2m-2}x^2 + p_m^2 = 0.
\]
The zeroes of $p_m$ are singularities of $X_s$ and these are the only singularities. Since $p_m \in H^0(X,K(D)^{m})$, there are $K(D)^m = m(2g-2+n)$ many singularities. As in the previous case, $X_s$ possesses an involution $\sigma(\eta)=-\eta$. Then the fixed points of this involution $\sigma$ are exactly the singularities of $X_s$. Let $\hat{X_s}$ denote the desingularisation of $X_s$ with genus
\[
g(\hat{X_s})= g(X_s) - \mathrm{number \hspace{0.1cm }of\hspace{0.1cm} singularities}.
\]
Since the singularities of $X_s$ are double points, the involution $\sigma$ on $X_s$ extends to an involution $\hat{\sigma}$ on $\hat{X_s}$. Therefore the \textit{Prym variety} $P_{s,\hat{\sigma}}\coloneqq \mathrm{Prym}(\hat{X_s}, \hat{X_s}/\hat{\sigma})$ is given by 
	\begin{equation}\label{desingular}
	P_{s,\hat{\sigma}} \coloneqq \mathrm{Prym}(\hat{X_s}, \hat{X_s}/\hat{\sigma}) = \{N \in \text{Jac}(\hat{X_s}): \hat{\sigma}^*N \cong N^\vee\}
	\end{equation}
As in the symplectic case, there is a bijective correspondence between the Prym variety $P_{s,\hat{\sigma}}$ and 
\[
\Omega_{s,\hat{\sigma}} \coloneqq \{(U,L,\tau)\hspace{0.1cm}|\hspace{0.1cm} U \in \text{Jac}(\hat{X_s}), \tau : \hat{\sigma}^*U \cong U^\vee \otimes (K_{\hat{X_s}} \otimes \pi^*K^\vee)^{-1} \otimes \pi^*L \}.
\]	
Again the action of the group $\mathrm{Jac}(X)$ on $\Omega_{s,\hat{\sigma}}$ is given by\ 
\begin{align*}
    \Omega_{s,\hat{\sigma}} \times \mathrm{Jac}(X) &\longrightarrow \Omega_{s,\hat{\sigma}}\\
    ((U,L,\tau),M) &\longmapsto (U \otimes \pi^*M, L \otimes M^2, \tau_M) 
\end{align*}
where $\tau_M= \tau \otimes \text{Id}_{\pi^*M}$ is the isomorphism
\[
\hat{\sigma}^*(U \otimes \pi^*M) \cong (U\otimes \pi^*M)^\vee \otimes (K_{\hat{X_s}} \otimes \pi^*K^\vee)^{-1} \otimes \pi^*(L\otimes M^2).
\]
\begin{theorem}\label{PSO_theorem}
For a generic point $s\in \mathcal{H}_{\mathrm{PSO-par}}$, the fiber $h_{\mathrm{PSO-par}}^{-1}(s)$ is isomorphic to $\Omega_{s,\hat{\sigma}}/\mathrm{Jac}(X)$.
\end{theorem}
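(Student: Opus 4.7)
The plan is to mirror the argument of Theorem~\ref{PSp}, reducing the statement to the spectral correspondence for parabolic $\mathrm{GSO}(2m,\mathbb{C})$-Higgs bundles established in \cite{R20} and then passing to $\mathrm{Jac}(X)$-orbits. The only genuine change compared with the symplectic case is that here the spectral curve $X_s$ is singular at the zeros of the Pfaffian $p_m$, so one must work on the desingularization $\hat{X_s}$ and check that every construction lifts through this normalization.

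First I would pick $(V,\eta) \in h_{\mathrm{PSO-par}}^{-1}(s)$ and choose a lift $(\Tilde{V},\Tilde{\eta})$ to a parabolic $\mathrm{GSO}(2m,\mathbb{C})$-Higgs bundle, corresponding to data $(E,\Phi,\varphi,L)$. The equivalence relation $\sim_{\mathrm{Jac}(X)}$ guarantees that the $\mathrm{Jac}(X)$-orbit $[(E,\Phi,\varphi,L)]$ is independent of the choice of lift, so the association $(V,\eta)\mapsto [(E,\Phi,\varphi,L)]$ is well defined and injective. Next, for a generic $s$ the Pfaffian $p_m$ has simple zeros; the spectral curve $X_s$ therefore has only ordinary double points at the fixed locus of $\sigma$, and the involution extends to $\hat{\sigma}$ on $\hat{X_s}$. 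By the orthogonal parabolic spectral correspondence (the analogue of \cite[Theorem 4.1]{R20} used in the previous section), the datum $(E,\Phi,\varphi,L)$ corresponds bijectively to an element $(U,L,\tau)\in \Omega_{s,\hat{\sigma}}$, where $U = (\pi^*)^{-1}$ of the eigenline bundle pushed forward recovers $E$ through $\pi_*U \cong E$ on $\hat{X_s}$ and $\tau$ encodes the symmetric pairing.

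Then I would verify equivariance: given $M\in \mathrm{Jac}(X)$, the tensored datum $(E\otimes M,\Phi\otimes\mathrm{Id}_M,\varphi_M,L\otimes M^2)$ corresponds, via the same spectral correspondence and the projection formula $\pi_*(U\otimes \pi^*M) \cong \pi_*U\otimes M$, to $(U\otimes \pi^*M, L\otimes M^2,\tau_M) \in \Omega_{s,\hat{\sigma}}$, which is precisely the defined action on $\Omega_{s,\hat{\sigma}}$. Hence the spectral correspondence descends to a bijection between $\mathrm{Jac}(X)$-orbits on both sides, yielding a set-theoretic isomorphism $h_{\mathrm{PSO-par}}^{-1}(s)\cong \Omega_{s,\hat{\sigma}}/\mathrm{Jac}(X)$; the algebraic structure follows because both constructions are functorial in families.

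The step I expect to be the main obstacle is the singularity of $X_s$: one must be careful that the spectral data recovered on $\hat{X_s}$ genuinely gives back a parabolic $\mathrm{GSO}$-Higgs bundle on $X$, i.e.\ that the orthogonal structure descends across the normalization map $\hat{X_s}\to X_s\to X$, and that the push-forward $\pi_*U$ is locally free of the expected rank so the projection formula and the identification $\pi_*U\cong E$ remain valid. Once this is handled exactly as in \cite{R20}, and one checks that $\tau_M$ is indeed a well-defined isomorphism (a routine chase through the four line-bundle isomorphisms displayed before the statement), the proof concludes along the same lines as Theorem~\ref{PSp}.
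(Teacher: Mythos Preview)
Your proposal is correct and follows essentially the same approach as the paper: lift to a parabolic $\mathrm{GSO}(2m,\mathbb{C})$-Higgs bundle, invoke the orthogonal spectral correspondence from \cite{R20} (the paper cites \cite[Theorem~4.2]{R20} specifically), use the projection formula to check $\mathrm{Jac}(X)$-equivariance, and descend to orbits. Your discussion of the desingularization $\hat{X_s}$ is more explicit than the paper's own proof, which simply defers that issue entirely to \cite{R20}.
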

\begin{proof}
Let $(V,\eta) \in h_{\mathrm{PSp-par}}^{-1}(s)$ lifts to a parabolic $\mathrm{GSO(2m,\mathbb{C})}$-Higgs bundle $(\Tilde{V},\Tilde{\eta})$ whose corresponding datum is $(E,\Phi,\varphi,L)$. Then $(V,\eta)$ corresponds to the datum of the $\mathrm{Jac}(X)$-orbit $[(E,\Phi,\varphi,L)]$ uniquely. By \cite[Theorem 4.2]{R20}, the datum of $(E,\Phi,\varphi,L)$ corresponds to an element of the Pyrm variety $\Omega_{s,\hat{\sigma}}$. Let $M \in \mathrm{Jac}(X)$. Then the datum of $(E \otimes M,\Phi \otimes \text{Id}_M,\varphi_M,L\otimes M^2)$ corresponds uniquely to an element of $\Omega_{s,\hat{\sigma}}/\mathrm{Jac}(X)$. Therefore, we conclude that the datum of the  $\mathrm{Jac}(X)$-orbit $[(E,\Phi,\varphi,L)]$ corresponds uniquely to the datum of the $\mathrm{Jac}(X)$-orbit of an element of $\Omega_{s,\hat{\sigma}}$.

\end{proof}

\section{Fixed line bundle : $\mathcal{O}_X$}
In this section, we will assume that the symplectic/orthogonal form in \ref{bilinear} takes values in the trivial line bundle $\mathcal{O}_X$. In particular, we consider the moduli space of parabolic symplectic/orthogonal Higgs bundles with fixed rank, degree and fixed line bundle $\mathcal{O}_X$. In other words, we are considering the moduli spaces with trivial determinant.

\subsection{Parabolic $\mathrm{PSp}(2m,\mathbb{C})$-Higgs bundles with fixed line bundle $\mathcal{O}_X$}

Let $\mathrm{Jac}_2(X)$ be the subgorup of the Jacobian $\mathrm{Jac}(X)$ which contains the $2$-torsion elements of $\mathrm{Jac}(X)$, i.e.
\[
\mathrm{Jac}_2(X) \coloneqq \{M \in \mathrm{Jac}(X) \hspace{0.1cm} | \hspace{0.1cm} M^2 \cong \mathcal{O}_X \}.
\]
Let $(E,\Phi,\varphi,\mathcal{O}_X)$ be a parabolic symplectic Higgs bundle with the symplectic form $\varphi : E \otimes E \to \mathcal{O}_X$ taking values in $\mathcal{O}_X$  and let $M \in \mathrm{Jac}_2(X)$. Then
\[
\varphi_M : (E \otimes M) \otimes (E\otimes M) \longrightarrow \mathcal{O}_X \otimes M^2 \cong \mathcal{O}_X \otimes \mathcal{O}_X \cong \mathcal{O}_X
\]
defines a symplectic form on $E \otimes M$ with values in $\mathcal{O}_X$.

Since the symplectic form takes values in $\mathcal{O}_X$, a parabolic $\mathrm{PSp}(2m,\mathbb{C})$-Higgs bundle $(V,\eta)$ lifts to a parabolic $\mathrm{Sp}(2m,\mathbb{C})$-Higgs bundle $(\Tilde{V},\Tilde{\eta})$. Let $(E,\Phi,\varphi,\mathcal{O}_X)$ be the parabolic symplectic Higgs bundle corresponding to $(\Tilde{V},\Tilde{\eta})$. Then $(V,\eta)$ corresponds to the equivalence class $[(E,\Phi,\varphi,\mathcal{O}_X)]$ where the equivalence relation $\sim_{\mathrm{Jac}_2(X)}$ is given by :
\[
(E,\Phi,\varphi,\mathcal{O}_X) \sim_{\mathrm{Jac}_2(X)} (E\otimes M,\Phi \otimes \mathrm{Id}_M,\varphi_M,\mathcal{O}_X) \hspace{1cm} \mathrm{for} \hspace{0.1cm} \mathrm{any} \hspace{0.2cm} M\in {\mathrm{Jac}_2(X)}.
\]
As in \ref{prym}, the Prym variety is given by
\[
\Omega_{s,\sigma} \coloneqq \{(U,\mathcal{O}_X,\tau)\hspace{0.1cm}|\hspace{0.1cm} U \in \text{Jac}(X_s), \tau : \sigma^*U \cong U^\vee \otimes (K_{X_s} \otimes \pi^*K^\vee)^{-1} \otimes \pi^*\mathcal{O}_X\}.
\]
Also, the group $\mathrm{Jac}_2(X)$ acts on $\Omega_{s,\sigma}$ by
\begin{align*}
    \Omega_{s,\sigma} \times \mathrm{Jac}_2(X) &\longrightarrow \Omega_{s,\sigma}\\
    ((U,\mathcal{O}_X,\tau),M) &\longmapsto (U \otimes \pi^*M,  \mathcal{O}_X\otimes M^2, \tau_M) \cong (U \otimes \pi^*M,  \mathcal{O}_X, \tau_M)
\end{align*}
where $\tau_M= \tau \otimes \text{Id}_{\pi^*M}$ is the isomorphism
\[
\sigma^*(U \otimes \pi^*M) \cong (U\otimes \pi^*M)^\vee \otimes (K_{X_s} \otimes \pi^*K^\vee)^{-1} \otimes \pi^*\mathcal{O}_X.
\]
\begin{theorem}\label{trivial_PSp}
For a generic point $s\in \mathcal{H}_{\mathrm{PSp-par},\mathcal{O}_X}$ of the parabolic $\mathrm{PSp}$-Hitchin map with the fixed line bundle $\mathcal{O}_X$, the fiber $h_{\mathrm{PSp-par},\mathcal{O}_X}^{-1}(s)$ is isomorphic to the quotient $\Omega_{s,\sigma}/\mathrm{Jac}_2(X)$.
\end{theorem}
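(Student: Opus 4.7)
The plan is to mirror the proof of Theorem \ref{PSp} almost verbatim, with the single essential modification that fixing the bilinear form to take values in $\mathcal{O}_X$ reduces both the lifting group and the twisting group by one degree of freedom: the lift sits in the parabolic $\mathrm{Sp}(2m,\mathbb{C})$-Higgs stack rather than the $\mathrm{GSp}(2m,\mathbb{C})$-Higgs stack, and the twisting line bundle $M$ must satisfy $M^2 \cong \mathcal{O}_X$, i.e.\ $M \in \mathrm{Jac}_2(X)$.

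Concretely, I would start with a generic $s \in \mathcal{H}_{\mathrm{PSp-par},\mathcal{O}_X}$ and a point $(V,\eta) \in h_{\mathrm{PSp-par},\mathcal{O}_X}^{-1}(s)$. Because $\varphi$ is valued in $\mathcal{O}_X$, the discussion preceding the theorem shows that $(V,\eta)$ lifts to a parabolic $\mathrm{Sp}(2m,\mathbb{C})$-Higgs bundle $(\tilde V,\tilde\eta)$ with datum $(E,\Phi,\varphi,\mathcal{O}_X)$, and $(V,\eta)$ is determined uniquely by the $\mathrm{Jac}_2(X)$-orbit $[(E,\Phi,\varphi,\mathcal{O}_X)]$ under the equivalence $\sim_{\mathrm{Jac}_2(X)}$ recalled above. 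Next I would invoke \cite[Theorem 4.1]{R20}, specialized to the case $L \cong \mathcal{O}_X$: the datum $(E,\Phi,\varphi,\mathcal{O}_X)$ corresponds bijectively to a triple $(U,\mathcal{O}_X,\tau) \in \Omega_{s,\sigma}$ whose second entry is trivial.

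It then remains to check, via the projection formula, that the twist
\[
(E,\Phi,\varphi,\mathcal{O}_X) \longmapsto (E\otimes M,\Phi\otimes\mathrm{Id}_M,\varphi_M,\mathcal{O}_X)
\]
by $M \in \mathrm{Jac}_2(X)$ translates on the spectral side into exactly the action
\[
(U,\mathcal{O}_X,\tau) \longmapsto (U\otimes \pi^*M,\mathcal{O}_X,\tau_M)
\]
described before the theorem. Since this is the restriction to $\mathrm{Jac}_2(X) \subset \mathrm{Jac}(X)$ of the action already used in Theorem \ref{PSp}, the required equivariance is inherited from the general case; the content that must be separately verified is only that the isomorphism $\mathcal{O}_X \otimes M^2 \cong \mathcal{O}_X$ used in defining $\varphi_M$ is compatible with the isomorphism $\pi^*(\mathcal{O}_X \otimes M^2) \cong \pi^*\mathcal{O}_X$ entering the spectral side. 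Combining the two bijections gives a canonical identification of $h_{\mathrm{PSp-par},\mathcal{O}_X}^{-1}(s)$ with $\Omega_{s,\sigma}/\mathrm{Jac}_2(X)$.

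The only genuine subtlety I foresee is making sure that no extra identifications sneak in: if some $M \in \mathrm{Jac}(X)\setminus \mathrm{Jac}_2(X)$ were to act trivially on $\mathcal{O}_X$ (up to isomorphism of the symplectic datum), the quotient would be smaller than claimed. This is exactly ruled out by the requirement that $M^2 \cong \mathcal{O}_X$ canonically, so the stabilizer of the trivial line bundle under $M \mapsto M^2$ in the $\mathrm{Jac}(X)$-action is precisely $\mathrm{Jac}_2(X)$. Once this is confirmed, the argument closes by the same logic as in Theorem \ref{PSp}, and the theorem follows.
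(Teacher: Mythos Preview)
Your proposal is correct and is exactly the approach the paper takes: the paper's own proof is the single sentence ``The proof is similar to the proof of the Theorem~\ref{PSp},'' and what you have written is precisely the unpacking of that sentence, specializing $L$ to $\mathcal{O}_X$ and replacing the $\mathrm{Jac}(X)$-action by its restriction to $\mathrm{Jac}_2(X)$.
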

\begin{proof}
The proof is similar to the proof of the Theorem \ref{PSp}.
\end{proof}

\subsection{Parabolic $\mathrm{PSO}(2m,\mathbb{C})$-Higgs bundles with fixed line bundle $\mathcal{O}_X$}
As in the symplectic case, a parabolic $\mathrm{PSO}(2m,\mathbb{C})$-Higgs bundle $(V,\eta)$ with the orthogonal form taking values in $\mathcal{O}_X$ lifts to a parabolic $\mathrm{SO}(2m,\mathbb{C})$-Higgs bundle $(\Tilde{V},\Tilde{\eta})$. Let $(E,\Phi,\varphi,\mathcal{O}_X)$ be the parabolic (even) orthogonal Higgs bundle corresponding to $(\Tilde{V},\Tilde{\eta})$. Then $(V,\eta)$ corresponds to the equivalence class $[(E,\Phi,\varphi,\mathcal{O}_X)]$ where the equivalence relation $\sim_{\mathrm{Jac}_2(X)}$ is given by:
\[
(E,\Phi,\varphi,\mathcal{O}_X) \sim_{\mathrm{Jac}_2(X)} (E\otimes M,\Phi \otimes \mathrm{Id}_M,\varphi_M,\mathcal{O}_X) \hspace{1cm} \mathrm{for} \hspace{0.1cm} \mathrm{any} \hspace{0.2cm} M\in {\mathrm{Jac}_2(X)}.
\]
Following the above section \ref{PSO}, the alternative description of the Prym variety (\ref{desingular}) is given by
\[
\Omega_{s,\hat{\sigma}} \coloneqq \{(U,\mathcal{O}_X,\tau)\hspace{0.1cm}|\hspace{0.1cm} U \in \text{Jac}(\hat{X_s}), \tau : \hat{\sigma}^*U \cong U^\vee \otimes (K_{\hat{X_s}} \otimes \pi^*K^\vee)^{-1} \otimes \pi^*\mathcal{O}_X \}.
\]
Also, the action of the subgroup $\mathrm{Jac}_2(X)$ on $\Omega_{s,\hat{\sigma}}$ is given by
\begin{align*}
    \Omega_{s,\hat{\sigma}} \times \mathrm{Jac}_2(X) &\longrightarrow \Omega_{s,\hat{\sigma}}\\
    ((U,\mathcal{O}_X,\tau),M) &\longmapsto (U \otimes \pi^*M, \mathcal{O}_X, \tau_M) 
\end{align*}
where $\tau_M= \tau \otimes \text{Id}_{\pi^*M}$ is the isomorphism
\[
\hat{\sigma}^*(U \otimes \pi^*M) \cong (U\otimes \pi^*M)^\vee \otimes (K_{\hat{X_s}} \otimes \pi^*K^\vee)^{-1} \otimes \pi^*\mathcal{O}_X.
\]
\begin{theorem}\label{trivial_PSO}
For a generic point $s\in \mathcal{H}_{\mathrm{PSO-par},\mathcal{O}_X}$ of the parabolic $\mathrm{PSO}$-Hitchin map with the fixed line bundle $\mathcal{O}_X$, the fiber $h_{\mathrm{PSO-par},\mathcal{O}_X}^{-1}(s)$ is isomorphic to the quotient $\Omega_{s,\hat{\sigma}}/\mathrm{Jac}_2(X)$.
\end{theorem}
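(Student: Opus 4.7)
The plan is to run the same spectral-correspondence argument used in Theorem \ref{PSO_theorem}, but restricted to the fixed trivial line bundle $\mathcal{O}_X$ and the subgroup $\mathrm{Jac}_2(X) \subset \mathrm{Jac}(X)$ which preserves that fixing. Take a point $(V,\eta) \in h_{\mathrm{PSO-par},\mathcal{O}_X}^{-1}(s)$ and lift it. Since the bilinear form is required to take values in $\mathcal{O}_X$, the lift is now naturally a parabolic $\mathrm{SO}(2m,\mathbb{C})$-Higgs bundle $(\tilde{V},\tilde{\eta})$ rather than a general $\mathrm{GSO}$-lift; let $(E,\Phi,\varphi,\mathcal{O}_X)$ be the corresponding datum. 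The first step is to check that $(V,\eta)$ is the $\sim_{\mathrm{Jac}_2(X)}$-equivalence class of this datum: the indeterminacy of the lift is exactly by those $M \in \mathrm{Jac}(X)$ for which tensoring preserves the fixed line bundle condition $L \otimes M^2 \cong \mathcal{O}_X$, i.e. $M^2 \cong \mathcal{O}_X$, which is the definition of $\mathrm{Jac}_2(X)$.

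Next I would invoke \cite[Theorem~4.2]{R20}, exactly as in the proof of Theorem \ref{PSO_theorem}, to associate to $(E,\Phi,\varphi,\mathcal{O}_X)$ a unique element $(U,\mathcal{O}_X,\tau) \in \Omega_{s,\hat{\sigma}}$ on the desingularised spectral curve $\hat{X}_s$. Here the spectral curve depends only on $s \in \mathcal{H}_{\mathrm{PSO-par},\mathcal{O}_X}$, so the whole fiber lives over a single $\hat{X}_s$ with its involution $\hat{\sigma}$, and the fact that $L = \mathcal{O}_X$ simply specialises the isomorphism $\tau$ to $\hat{\sigma}^*U \cong U^\vee \otimes (K_{\hat{X}_s} \otimes \pi^*K^\vee)^{-1} \otimes \pi^*\mathcal{O}_X$.

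The third step is the equivariance check, which is the heart of the argument. For $M \in \mathrm{Jac}_2(X)$, the projection formula gives $\pi_*(U \otimes \pi^*M) \cong (\pi_*U) \otimes M$, so the spectral correspondence sends the $\mathrm{Jac}_2(X)$-action on the $\mathrm{SO}$-datum side (which is $(E,\Phi,\varphi,\mathcal{O}_X) \mapsto (E\otimes M, \Phi\otimes \mathrm{Id}_M,\varphi_M,\mathcal{O}_X)$, well-defined because $M^2 \cong \mathcal{O}_X$) to the $\mathrm{Jac}_2(X)$-action on $\Omega_{s,\hat{\sigma}}$ described right above the theorem, $(U,\mathcal{O}_X,\tau) \mapsto (U \otimes \pi^*M, \mathcal{O}_X, \tau_M)$. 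The computation that $\tau_M$ lands in the correct space is formally identical to the one displayed before Theorem \ref{PSO_theorem}, with $L = M^2 = \mathcal{O}_X$. Combining with the bijectivity statement of \cite[Theorem~4.2]{R20}, the induced map on orbits $[(E,\Phi,\varphi,\mathcal{O}_X)] \mapsto [(U,\mathcal{O}_X,\tau)]$ yields an isomorphism $h_{\mathrm{PSO-par},\mathcal{O}_X}^{-1}(s) \cong \Omega_{s,\hat{\sigma}}/\mathrm{Jac}_2(X)$.

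The main potential obstacle is verifying that the lift to an $\mathrm{SO}$-Higgs bundle really is unique up to $\mathrm{Jac}_2(X)$ rather than a larger group, and that at the level of the Prym description the action of $\mathrm{Jac}_2(X)$ is free enough on a generic fiber that the quotient is well-behaved; however both of these reduce to the analogous checks already carried out in Theorem \ref{PSO_theorem} and Theorem \ref{trivial_PSp}, since the only genuinely new ingredient is the restriction $L \cong \mathcal{O}_X$, which is what selects $\mathrm{Jac}_2(X)$ out of $\mathrm{Jac}(X)$ in the first place.
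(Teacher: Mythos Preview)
Your proposal is correct and follows exactly the approach the paper intends: the paper's own proof consists of the single sentence ``The proof is similar to the proof of the Theorem \ref{PSO_theorem},'' and what you have written is precisely that argument spelled out, with the specialisation $L=\mathcal{O}_X$ forcing the acting group down from $\mathrm{Jac}(X)$ to $\mathrm{Jac}_2(X)$. In fact you give more detail than the paper does, including the explicit use of the projection formula and the identification of the lift's indeterminacy with $\mathrm{Jac}_2(X)$.
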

\begin{proof}
The proof is similar to the proof of the Theorem \ref{PSO_theorem}.
\end{proof}
\begin{remark}
We can actually consider any degree zero line bundle in place of $\mathcal{O}_X$.
\end{remark}

\section*{Acknowledgement}
This work was supported by the Institute for Basic Science (IBS-R003-D1).

\section*{Data Availability}
Data sharing is not applicable to this article as no new data were created or analyzed in this study.

\end{document}